\theoremstyle{plain}
\newtheorem{theorem}{Theorem}
\theoremstyle{definition}
\newtheorem{definition}{Definition}
\newcommand{\pp}{\textsc{PL}}
\newcommand{\bell}{\textup{B}}
\begin{document}
	
\title[Colored and plane partitions]{Formulas for the number of $k$-colored partitions and the number of plane partitions of $n$ in terms of the Bell Polynomials}

\author{Sumit Kumar Jha}
\address{International Institute of Information Technology\\
Hyderabad-500032, India}
\email{kumarjha.sumit@research.iiit.ac.in}

\subjclass[2010]{05A15}

\keywords{$k$-coloured partition; plane partitions} 
\begin{abstract}
We derive closed formulas for the number of $k$-coloured partitions and the number of plane partitions of $n$ in terms of the Bell polynomials. 
\end{abstract}

\maketitle

\section{Introduction}
\begin{definition}[$k$-colored partitions of $n$ \cite{Chern}]
A partition of a positive integer $n$ is a finite weakly decreasing sequence of positive integers $\lambda_{1}\geq \lambda_{2}\geq \cdots \geq \lambda_{r}>0$ such that $\sum_{i=1}^{r}\lambda_{i}=n$. The $\lambda_{i}$'s are called the \emph{parts} of the partition. Let $p(n)$ denote the number of partitions of $n$.\par 
A partition is said to be \emph{$k$-coloured} if each part can occur as $k$ colours. Let $p_{k}(n)$ denote the number of $k$ coloured partitions of $n$. For example, there are five $2$-coloured partitions of $2$:
$$
1+1, 1'+1, 1'+1', 2, 2'
$$
so $p_{2}(2)=5$.
\end{definition}
The generating function for $p_{k}(n)$ is given by
$$
\sum_{n=0}^{\infty}p_{k}(n)q^{n}=\prod_{j=1}^{\infty}\frac{1}{(1-q^{j})^{k}} \qquad (|q|< 1).
$$
The right hand side of the above equation is also the generating function for the \emph{multipartitions} which appear in many papers on the representations theory of Lie algebras, and some with applications in physics \cite{Andrews}.\par 
The unrestricted partition function $p(n)=p_{1}(n)$ can be computed using Hardy-Ramanujan-Rademacher formula with soft complexity $O(n^{1/2+o(1)})$  and very little overhead \cite{Johansson}.\par  In section 2, we obtain a formula for the number of $k$-colored partitions of $n$ in terms of the partial Bell polynomials which allows computing them efficiently. For example, using our formula we can calculate  
$$
p_{30}(200)=23945275792616100703623332622769220026826156718318470749445535353589
$$
which are the number of thirty colored partitions of $200$.
\begin{definition}[Plane partitions of $n$]
A \emph{plane partition} of $n$ is a two-dimensional array of integers $n_{i,j}$ that are non-increasing in both indices, that is, 
$$
{\displaystyle n _{i,j}\geq n _{i,j+1}\quad }  {\displaystyle \quad n _{i,j}\geq n _{i+1,j}\,} 
$$
and
$$
n=\sum_{i,j}n_{i,j}.
$$
Let $\pp(n)$ denote the number of plane partitions of $n$. For example, there are six plane partitions of $3$:
$$
{\begin{matrix}1&1&1\end{matrix}}\qquad {\begin{matrix}1&1\\1&\end{matrix}}\qquad {\begin{matrix}1\\1\\1&\end{matrix}}\qquad {\begin{matrix}2&1&\end{matrix}}\qquad {\begin{matrix}2\\1&\end{matrix}}\qquad {\begin{matrix}3\end{matrix}}$$
so $\pp(3)=6$.
\end{definition}
Macmohan \cite{Knuth} obtained the generating function of $\pp(n)$ as
$$
\sum_{n=0}^{\infty}\pp(n)\, q^{n}=\sum_{j=1}^{\infty}\frac{1}{(1-q^{j})^{j}}\qquad (|q|<1).
$$
In section $4$, we obtain an expression for $\pp(n)$ in terms of the complete Bell polynomials and sum of squares of divisors of $n$. This also gives a determinant expression for $\pp(n)$. For example, we can calculate
$$
\pp(700)=1542248695905922088013690041381656661664744761954709483748320717869.
$$
\begin{definition}[Partial Bell polynomials]
For $n$ and $k$ non-negative integers, the partial $(n,k)$th partial Bell polynomials in the variables $x_{1},x_{2},\dotsc,x_{n-k+1}$ denoted by $B_{n,k}\equiv\bell_{n,k}(x_1,x_2,\dotsc,x_{n-k+1})$ \cite[p. 206]{Comtet} can be defined by
\begin{equation*}
B_{n,k}(x_1,x_2,\dotsc,x_{n-k+1})=\sum_{\substack{1\le i\le n,\ell_i\in\mathbb{N}\\ \sum_{i=1}^ni\ell_i=n\\ \sum_{i=1}^n\ell_i=k}}\frac{n!}{\prod_{i=1}^{n-k+1}\ell_i!} \prod_{i=1}^{n-k+1}\Bigl(\frac{x_i}{i!}\Bigr)^{\ell_i}.
\end{equation*}
\end{definition}
The partial Bell polynomials have the following generating function:
$$
{\displaystyle {\frac {1}{k!}}\left(\sum _{j=1}^{\infty }x_{j}{\frac {t^{j}}{j!}}\right)^{k}=\sum _{n=k}^{\infty }B_{n,k}(x_{1},\ldots ,x_{n-k+1}){\frac {t^{n}}{n!}},\qquad k=0,1,2,\ldots }.
$$
The partial Bell polynomials can also be computed efficiently by a recurrence relation \cite{Hamed}:
$$
{\displaystyle B_{n,k}=\sum _{i=1}^{n-k+1}{\binom {n-1}{i-1}}x_{i}B_{n-i,k-1},}
$$
where
\begin{align*}
    {\displaystyle B_{0,0}=1}\\
 {\displaystyle B_{n,0}=0{\text{ for }}n\geq 1;}\\
 {\displaystyle B_{0,k}=0{\text{ for }}k\geq 1.} 
\end{align*}
Cvijovi\'{c} \cite{Jkovic} gives the following formula for calculating these polynomials
\begin{align}
\label{explicit}
 B_{n, k + 1}  =  & \frac{1}{(k+1)!} \underbrace{\sum_{\alpha_1\,=  k}^{n-1} \, \sum_{\alpha_2\,=  k-1}^{\alpha_1-1} \cdots  \sum_{\alpha_k\, = 1}^{\alpha_{k-1}-1} }_{k }
\overbrace{\binom{n}{\alpha_1}  \binom{\alpha_1}{\alpha_2} \cdots  \binom{\alpha_{k-1}}{\alpha_k}}^{k}\nonumber
\\
&\cdot x_{n-\alpha_1} x_{\alpha_1 -\alpha_2} \cdots x_{\alpha_{k-1}-\alpha_k} x_{\alpha_k} \qquad(n\geq k+1, k\,=1, 2, \ldots)
\end{align}
The $n$th \emph{complete Bell polynomials} are defined as 
$$
B_{n}(x_{1},x_{2},\cdots,x_{n})=\sum_{k=0}^{n} B_{n,k}(x_1,x_2,\dotsc,x_{n-k+1}).
$$
The complete Bell polynomials can be recurrently defined as
$$
 {\displaystyle B_{n+1}(x_{1},\ldots ,x_{n+1})=\sum _{i=0}^{n}{n \choose i}B_{n-i}(x_{1},\ldots ,x_{n-i})x_{i+1}}
$$
with the initial value   ${\displaystyle B_{0}=1}$.
We would frequently use \emph{Fa\`{a} di Bruno's formula} \cite[p. 134]{Comtet} which is
\begin{equation}
\label{faa}
{d^n \over dq^n} f(g(q)) = \sum_{l=1}^n f^{(l)}(g(q))\cdot B_{n,l}\left(g'(q),g''(q),\dots,g^{(n-l+1)}(q)\right).
\end{equation}
\section{Formula for $k$-coloured partitions of $n$}
\begin{theorem}
For all positive integers $n,k$
$$
p_{k}(n)=\frac{1}{n!}\sum_{l=0}^{n}(-1)^{l}\, (k)^{(l)}\, B_{n,l}(\lambda_{1},\lambda_{2},\cdots,\lambda_{n-l+1})
$$
where 
$$
\normalfont
\lambda_{i}=
\begin{cases}
(-1)^{m}\, i! & \text{if $i=\frac{m(3m+1)}{2}$ for some positive integer $m$}\\
(-1)^{m}\, i!& \text{if $i=\frac{m(3m-1)}{2}$ for some positive integer $m$}\\
0&\text{otherwise}
\end{cases}
,
$$
where $(k)^{(l)}=k\, (k+1)\cdots (k+l-1)$ represents the rising factorial.
\end{theorem}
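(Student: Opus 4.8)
The plan is to read off $p_{k}(n)$ as a Taylor coefficient of the generating function and then apply Fa\`a di Bruno's formula to the composition $u \mapsto u^{-k}$ with the Euler product. Writing $\phi(q)=\prod_{j=1}^{\infty}(1-q^{j})$, the generating function identity stated in the introduction becomes
$$
\sum_{n=0}^{\infty} p_{k}(n)\, q^{n} = \phi(q)^{-k},
$$
so that $p_{k}(n)=\frac{1}{n!}\left.\frac{d^{n}}{dq^{n}}\phi(q)^{-k}\right|_{q=0}$. Since $\phi$ is analytic on $|q|<1$ with $\phi(0)=1\neq 0$, the function $\phi(q)^{-k}$ is analytic near the origin and these coefficients are well defined.

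Next I would apply Fa\`a di Bruno's formula \eqref{faa} with $f(u)=u^{-k}$ and $g=\phi$. The outer derivatives are immediate:
$$
f^{(l)}(u)=(-k)(-k-1)\cdots(-k-l+1)\, u^{-k-l}=(-1)^{l}\,(k)^{(l)}\, u^{-k-l},
$$
so evaluating at $u=\phi(0)=1$ yields $f^{(l)}(\phi(0))=(-1)^{l}\,(k)^{(l)}$. Substituting into \eqref{faa} and setting $q=0$ gives
$$
\left.\frac{d^{n}}{dq^{n}}\phi(q)^{-k}\right|_{q=0}=\sum_{l=1}^{n}(-1)^{l}\,(k)^{(l)}\, B_{n,l}\bigl(\phi'(0),\phi''(0),\dots,\phi^{(n-l+1)}(0)\bigr).
$$

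The remaining input is the vector of inner derivatives $\phi^{(i)}(0)$, which I would identify through Euler's pentagonal number theorem. Since
$$
\phi(q)=\prod_{j=1}^{\infty}(1-q^{j})=1+\sum_{m=1}^{\infty}(-1)^{m}\bigl(q^{m(3m-1)/2}+q^{m(3m+1)/2}\bigr),
$$
the coefficient of $q^{i}$ equals $(-1)^{m}$ precisely when $i$ is a generalized pentagonal number $m(3m\pm1)/2$ and vanishes otherwise; multiplying by $i!$ shows $\phi^{(i)}(0)=\lambda_{i}$ with $\lambda_{i}$ exactly as in the statement. Finally, because $B_{n,0}=0$ for $n\geq 1$, the $l=0$ term contributes nothing, so the sum may be started at $l=0$, and dividing by $n!$ yields the claimed formula. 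I expect the only genuine care needed is the bookkeeping aligning $\phi^{(i)}(0)=i!\,[q^{i}]\phi(q)$ with the two pentagonal branches defining $\lambda_{i}$; the analytic manipulation is otherwise routine.
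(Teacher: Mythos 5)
Your proposal is correct and follows essentially the same route as the paper: apply Fa\`a di Bruno's formula \eqref{faa} with $f(u)=u^{-k}$ and $g(q)=\prod_{j\geq 1}(1-q^{j})$, evaluate at $q=0$, and read off the inner derivatives $g^{(i)}(0)=\lambda_{i}$ from Euler's pentagonal number theorem. If anything, you are slightly more careful than the paper, since you explicitly justify $f^{(l)}(\phi(0))=(-1)^{l}(k)^{(l)}$ via $\phi(0)=1$ and note that the $l=0$ term vanishes for $n\geq 1$.
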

\begin{proof}
We recall the Euler's pentagonal number theorem \cite[Equation 7.8]{Fine}
$$
g(q):=\prod_{j=1}^{\infty}(1-q^{j})=\sum_{n=\infty}^{\infty}(-1)^{n}\, q^{\frac{3n^2+n}{2}}.
$$
Let $f(q)=q^{-k}$. Then the Fa\`{a} di Bruno's formula \eqref{faa} gives
$$
{d^n \over dq^n} g(q)^{-k} = \sum_{l=1}^n (-1)^{l}\, \frac{(k)^{(l)}}{(g(q))^{l+k}}\, B_{n,l}\left(g'(q),g''(q),\dots,g^{(n-l+1)}(q)\right).
$$
Letting $q\rightarrow 0$ in the above we obtain our result readily.
\end{proof}
\section{Another expression for $p_{k}(n)$}
\begin{theorem}
For all positive integers $n,k$
$$
p_{k}(n)=\frac{B_{n}( k\, \sigma(1),1!\, k\, \sigma(2),\cdots,k\,(n-1)!\, \sigma(n))}{n!},
$$
where $\sigma(n)=\sum_{d|n}d$.
\end{theorem}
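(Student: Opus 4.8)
The plan is to reduce the statement to a single generating-function identity. The complete Bell polynomials carry the exponential generating function
$$
\sum_{n=0}^{\infty} B_n(x_1,\dots,x_n)\frac{t^n}{n!} = \exp\left(\sum_{m=1}^{\infty} x_m \frac{t^m}{m!}\right),
$$
which follows at once from the partial generating function quoted in the introduction by summing over $k$ and using $B_n=\sum_{k}B_{n,k}$. Specializing to $x_m = k\,(m-1)!\,\sigma(m)$ collapses the factorials, since
$$
\sum_{m=1}^{\infty} k\,(m-1)!\,\sigma(m)\,\frac{t^m}{m!} = k\sum_{m=1}^{\infty}\frac{\sigma(m)}{m}\,t^m.
$$
Because the coefficient of $t^n$ in the product $\prod_{j\ge1}(1-t^j)^{-k}$ is exactly $p_k(n)$, the theorem is therefore equivalent to the identity
$$
\prod_{j=1}^{\infty}\frac{1}{(1-t^{j})^{k}} = \exp\left(k\sum_{m=1}^{\infty}\frac{\sigma(m)}{m}\,t^m\right).
$$

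The second step is to verify this identity by taking logarithms of the product. Using $-\log(1-x)=\sum_{r\ge1}x^r/r$ one obtains
$$
\log\prod_{j=1}^{\infty}\frac{1}{(1-t^{j})^{k}} = k\sum_{j=1}^{\infty}\sum_{r=1}^{\infty}\frac{t^{jr}}{r}.
$$
I would then regroup the terms by total degree $n=jr$. For fixed $n$ the admissible pairs $(j,r)$ are indexed by the divisors $r\mid n$, and substituting $d=n/r$ converts each weight $1/r$ into $d/n$, so that
$$
\sum_{\substack{j,r\ge1\\ jr=n}}\frac{1}{r} \;=\; \frac{1}{n}\sum_{d\mid n} d \;=\; \frac{\sigma(n)}{n}.
$$
Feeding this back gives the exponent $k\sum_{n\ge1}\sigma(n)\,t^n/n$, which matches the target exactly; exponentiating and reading off the coefficient of $t^n$ then yields the claimed formula for $p_k(n)$.

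The only point demanding care — and the main technical obstacle — is the interchange of summation order in the double series, since I am rearranging an infinite family of monomials. For $|t|<1$ the series converges absolutely, being dominated by $\sum_{j,r\ge1}|t|^{jr}=\sum_{j\ge1}|t|^j/(1-|t|^j)\le |t|/(1-|t|)^2<\infty$, so the regrouping by degree is justified by the Fubini--Tonelli theorem for series. Apart from this convergence bookkeeping I expect no difficulty: the entire combinatorial content is concentrated in the elementary divisor identity $\sum_{r\mid n}1/r=\sigma(n)/n$, which is what produces the sum-of-divisors function appearing in the Bell-polynomial arguments.
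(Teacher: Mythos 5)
Your proposal is correct and is essentially the paper's own argument in generating-function form: the paper computes the identical logarithm $\log h(q)=k\sum_{n\ge 1}\sigma(n)\,q^{n}/n$ via the same divisor regrouping, and then applies Fa\`{a} di Bruno's formula with $f=e^{q}$ evaluated at $q=0$, which is exactly the EGF identity $\sum_{n\ge 0}B_{n}(x_1,\dots,x_n)\,t^{n}/n!=\exp\bigl(\sum_{m\ge 1}x_m t^{m}/m!\bigr)$ that you invoke instead. Your specialization $x_m=k\,(m-1)!\,\sigma(m)$ matches the paper's evaluation $g^{(m)}(0)$ step for step, and your convergence remark only makes explicit a justification the paper leaves tacit.
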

\begin{proof}
Let 
$$
h(q):=\prod_{j=1}^{\infty}\frac{1}{(1-q^{j})^{k}}.
$$
Then we have
\begin{align*}
    \log{h(q)}&=-\sum_{j=1}^{\infty}k\, \log(1-q^{j})\\
    &=\sum_{j=1}^{\infty}\sum_{l=1}^{\infty}k\,\frac{q^{lj}}{l}\\
    &=\sum_{n=1}^{\infty}k\, q^{n}\sum_{d|n}d^{-1}.\\
    &=\sum_{n=1}^{\infty}\frac{q^{n}\,k}{n}\sum_{d|n}d.
\end{align*}
Let $f(q)=e^{q}$ and $g(q)=\log{h(q)}$ in Fa\`{a} di Bruno's formula \eqref{faa} gives
$$
{d^n \over dq^n} h(q) = \sum_{l=1}^n h(q)\, B_{n,l}\left(g'(q),g''(q),\dots,g^{(n-l+1)}(q)\right).
$$
Letting $q\rightarrow 0$ in the above gives us our result readily.
\end{proof}
The determinant expression for the complete Bell polynomials \cite[Theorem 2.1]{Xu} gives us
$$
p_{k}(n)=\frac{1}{n!}\det\begin{bmatrix}
k\,\sigma(1)  & k\, \sigma(2)  & k\,\sigma(3)  & k\,\sigma(4) & \cdots & \cdots & k\, \sigma(n) \\  \\
-1 & k\,\sigma(1) & k\,\sigma(2)  & k\,\sigma(3) &  \cdots & \cdots &
k\,\sigma(n-1) \\  \\
0 & -2 & k\,\sigma(1) &  k\,\sigma(2) & \cdots & \cdots &
k\,\sigma(n-2) \\  \\
0 & 0 & -3 & k\,\sigma(1) & \cdots  & \cdots & k\,\sigma(n-3) \\  \\
0 & 0 & 0 & -4 & \cdots & \cdots & k\,\sigma(n-4) \\  \\
\vdots & \vdots & \vdots &  \vdots & \ddots & \ddots & \vdots  \\  \\
0 & 0 & 0 & 0 & \cdots & -(n-1) & k\,\sigma(1) \end{bmatrix}.
$$
Thus
$$
p_{2}(2)=\frac{1}{2}\det
\begin{bmatrix}
2 & 6\\
-1 & 2
\end{bmatrix}
=5.
$$
\section{Formula for Plane partitions of $n$}
\begin{theorem}
For all positive integers $n$
$$
\pp(n)=\frac{B_{n}( \sigma_{2}(1),1!\, \sigma_{2}(2),\cdots,(n-1)!\, \sigma_{2}(n))}{n!},
$$
where $\sigma_{2}(d)=\sum_{d|n}d^{2}$.
\end{theorem}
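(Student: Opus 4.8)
The plan is to mirror the proof of the preceding theorem for $p_k(n)$ via the complete Bell polynomials, replacing Euler's product by MacMahon's generating function and tracking the extra exponent $j$ that distinguishes $\sigma_2$ from $\sigma$. First I would set
$$
H(q):=\prod_{j=1}^{\infty}\frac{1}{(1-q^{j})^{j}},
$$
so that $\sum_{n\geq 0}\pp(n)\,q^{n}=H(q)$ and hence $\pp(n)=\frac{1}{n!}\,H^{(n)}(0)$. The goal is to compute this $n$th derivative at $0$ through the logarithm of $H$.

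Next I would expand $\log H$ as a power series. Writing $-\log(1-x)=\sum_{l\geq 1}x^{l}/l$ gives
$$
\log H(q)=-\sum_{j=1}^{\infty}j\,\log(1-q^{j})=\sum_{j=1}^{\infty}\sum_{l=1}^{\infty}\frac{j}{l}\,q^{jl}.
$$
Collecting the coefficient of $q^{n}$ by setting $n=jl$ and summing over the divisors $j\mid n$ (so that $l=n/j$) yields
$$
\log H(q)=\sum_{n=1}^{\infty}q^{n}\sum_{j\mid n}\frac{j^{2}}{n}=\sum_{n=1}^{\infty}\frac{\sigma_{2}(n)}{n}\,q^{n}.
$$
The only difference from the $p_k(n)$ computation is that the exponent $j$ in the product supplies one further factor of $j$, so the divisor sum $\sum_{d\mid n}d$ is promoted to $\sum_{d\mid n}d^{2}=\sigma_{2}(n)$; this substitution is the heart of the argument.

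With $g(q):=\log H(q)$ in hand, I would apply Fa\`{a} di Bruno's formula \eqref{faa} with $f(q)=e^{q}$, so that $H(q)=f(g(q))$. Since every derivative of $f$ is again $e^{q}$, we have $f^{(l)}(g(q))=e^{g(q)}=H(q)$ for all $l$, whence
$$
H^{(n)}(q)=H(q)\sum_{l=1}^{n}B_{n,l}\bigl(g'(q),\dots,g^{(n-l+1)}(q)\bigr)=H(q)\,B_{n}\bigl(g'(q),\dots,g^{(n)}(q)\bigr),
$$
the last step using $B_{n,0}=0$ for $n\geq 1$ together with the definition of the complete Bell polynomial. From the series for $g$ I would read off the required Taylor data: upon $i$-fold differentiation at $0$ only the degree-$i$ term survives, giving $g^{(i)}(0)=i!\cdot\frac{\sigma_{2}(i)}{i}=(i-1)!\,\sigma_{2}(i)$. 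Letting $q\to 0$ and using $H(0)=1$ then gives $H^{(n)}(0)=B_{n}\bigl(\sigma_2(1),1!\,\sigma_2(2),\dots,(n-1)!\,\sigma_2(n)\bigr)$, and dividing by $n!$ produces the claimed formula.

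As for obstacles, the computation is routine and almost entirely parallel to the previous theorem, so I anticipate no serious difficulty. The only points deserving a little care are the legitimacy of the term-by-term logarithmic expansion and the rearrangement of the double sum, which are standard for $|q|<1$ (or purely formal at the level of power series), and the divisor-sum bookkeeping that converts $\sigma$ into $\sigma_2$.
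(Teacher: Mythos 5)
Your proposal is correct and follows essentially the same route as the paper's own proof: logarithmic expansion of MacMahon's product to get $\log H(q)=\sum_{n\ge 1}\frac{\sigma_2(n)}{n}q^n$, then Fa\`{a} di Bruno's formula with $f(q)=e^q$ evaluated at $q\to 0$. You actually spell out two steps the paper leaves implicit (the Taylor data $g^{(i)}(0)=(i-1)!\,\sigma_2(i)$ and the passage from the sum of partial Bell polynomials to the complete Bell polynomial), but the argument is the same.
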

\begin{proof}
Let 
$$
h(q):=\prod_{j=1}^{\infty}\frac{1}{(1-q^{j})^{j}}.
$$
Then we have
\begin{align*}
    \log{h(q)}&=-\sum_{j=1}^{\infty}j\, \log(1-q^{j})\\
    &=\sum_{j=1}^{\infty}\sum_{l=1}^{\infty}j\,\frac{q^{lj}}{l}\\
    &=\sum_{n=1}^{\infty}n\, q^{n}\sum_{d|n}d^{-2}.\\
    &=\sum_{n=1}^{\infty}\frac{q^{n}}{n}\sum_{d|n}d^{2}.
\end{align*}
Let $f(q)=e^{q}$ and $g(q)=\log{h(q)}$ in Fa\`{a} di Bruno's formula \eqref{faa} gives
$$
{d^n \over dq^n} h(q) = \sum_{l=1}^n h(q)\, B_{n,l}\left(g'(q),g''(q),\dots,g^{(n-l+1)}(q)\right).
$$
Letting $q\rightarrow 0$ in the above gives us our result readily.
\end{proof}
We immediately have the following determinant formula for $\pp(n)$ \cite[Theorem 2.1]{Xu}
$$
\pp(n)=\frac{1}{n!}\det\begin{bmatrix}
\sigma_{2}(1)  & \sigma_{2}(2)  & \sigma_{2}(3)  & \sigma_{2}(4) & \cdots & \cdots & \sigma_{2}(n) \\  \\
-1 & \sigma_{2}(1) & \sigma_{2}(2)  & \sigma_{2}(3) &  \cdots & \cdots &
\sigma_{2}(n-1) \\  \\
0 & -2 & \sigma_{2}(1) &  \sigma_{2}(2) & \cdots & \cdots &
\sigma_{2}(n-2) \\  \\
0 & 0 & -3 & \sigma_{2}(1) & \cdots  & \cdots & \sigma_{2}(n-3) \\  \\
0 & 0 & 0 & -4 & \cdots & \cdots & \sigma_{2}(n-4) \\  \\
\vdots & \vdots & \vdots &  \vdots & \ddots & \ddots & \vdots  \\  \\
0 & 0 & 0 & 0 & \cdots & -(n-1) & \sigma_{2}(1) \end{bmatrix}.
$$
For example, using the above we have
$$
\pp(3)=\frac{1}{6}\det
\begin{bmatrix}
1 & 5 & 10 \\
-1 & 1 & 5 \\
0 & -2 & 1
\end{bmatrix}
=6.
$$


\begin{thebibliography}{99}
\bibitem{Chern}
S. Chern, S. Fu \& D. Tang, Some inequalities for $k$-colored partition functions, Ramanujan J 46, 713--725 (2018).
\bibitem{Andrews}
Andrews G.E. (2008) A Survey of Multipartitions Congruences and Identities. In: Surveys in Number Theory. Developments in Mathematics (Diophantine Approximation: Festschrift for Wolfgang Schmidt), vol 17. Springer, New York, NY.
\bibitem{Johansson}
Johansson, F. (2012). Efficient implementation of the Hardy–Ramanujan–Rademacher formula. LMS Journal of Computation and Mathematics, 15, 341-359.
\bibitem{Knuth}
Knuth, D. (1970). A Note on Solid Partitions. Mathematics of Computation, 24(112), 955-961
\bibitem{Comtet} L. Comtet, {\em Advanced Combinatorics: The Art of Finite and Infinite Expansions}, D. Reidel Publishing Co.,  Dordrecht, 1974.
\bibitem{Hamed}
H. Taghavian, A fast algorithm for computing Bell polynomials based on index break-downs using prime factorization, preprint, 2020. Available at \url{https://arxiv.org/abs/2004.09283}.
\bibitem{Jkovic}
D. Cvijovi\'{c}, New identities for the partial Bell polynomials, Applied mathematics letters, \textbf{24} (2011), 1544--1547.
\bibitem{Xu}
A. Xu \& Z. Cen, On a
$q$-analogue of Fa\`{a} di Bruno’s determinant formula, Discrete Mathematics, \textbf{6} (2011), 387--392.
\bibitem{Fine}
N. J. Fine, \emph{Basic Hypergeometric Series and Applications}, American Mathematical Soc., 1988.
\end{thebibliography}
\end{document}